\documentclass[12pt]{article}
\usepackage[margin=2.5cm, bottom=3cm]{geometry}
\usepackage[colorlinks=true, allcolors=blue]{hyperref}
\usepackage{amssymb}
\usepackage{latexsym,bm}
\usepackage{graphicx}
\usepackage{amsmath}
\usepackage{mathrsfs}
\usepackage{mathrsfs,amscd,amssymb,amsthm,amsmath,bm,graphicx,psfrag,subfigure,url,xcolor}
\usepackage{tikz}
\newtheorem{theorem}{Theorem}

\newtheorem{lemma}[theorem]{Lemma}

\newtheorem{conjecture}{Conjecture}
\newtheorem{observation}[theorem]{Observation}

\newtheorem{definition}[theorem]{Definition}

\usepackage{epstopdf}

\date{}
\newcounter{mathitem}
{\begin{list}{{$(\roman{mathitem})$}}{
\setcounter{mathitem}{0}
\usecounter{mathitem}
\setlength{\topsep}{0pt plus 2pt minus 0pt}
\setlength{\parskip}{0pt plus 2pt minus 0pt}
\setlength{\partopsep}{0pt plus 2pt minus 0pt}
\setlength{\parsep}{0pt plus 2pt minus 0pt}
\setlength{\leftmargin}{35pt}
\setlength{\itemsep}{0pt plus 2pt minus 0pt}}}
{\end{list}}
	
\begin{document}
\title{Size conditions for admissible or consecutive even cycles in graphs}
\author{Jifu Lin}
\maketitle
\footnotetext[1]{Department of Mathematics, East China Normal University, Shanghai 200241, China}
\footnotetext[2]{E-mail addresses: {jifulin01@163.com} (J. Lin).}

\begin{abstract}
	
In 2022, Gao, Huo, Liu, and Ma proved that every graph with minimum degree at least $k+1$ contains $k$ admissible cycles, where a set of $k$ cycles is said to be admissible if their lengths form an arithmetic progression with common difference one or two. In this paper, we provide
a sharp size analogue of their result and characterize the extremal graphs attaining the lower bound. In 2016, Verstra\"ete conjectured that every $n$-vertex graph $G$ containing no $k$ cycles of consecutive even lengths has at most $(2k+1)(n-1)/2$ edges, with equality only if every block of $G$ is a clique of order $2k+1$. We prove this conjecture for $2k+2\leq n\leq 4k+1$, and in fact obtain a stronger result in this range.

\end{abstract}

{\bf Keywords.} Admissible cycles; consecutive even cycles; size; cycle lengths

{\bf Mathematics Subject Classification.} 05C38, 05C35
\vskip 8mm

\section{Introduction}

The distribution of cycle lengths in graphs has long been a significant problem in graph theory. Erdős posed numerous fundamental open problems on the distribution of cycle lengths in graphs (e.g., \cite{Erd76,Erd92,Erd95}). One such question of Erdős (see \cite{BV}) is as follows: does every simple graph with minimum degree at least three contain two cycles whose lengths differ by one or two? Bondy and Vince \cite{BV} completely settled this problem in 1998 via a stronger conclusion.

We say that $k$ cycles $C_1,C_2,\ldots,C_k$ are {\em admissible} if $|C_1|,|C_2|,\ldots,|C_k|$ form an arithmetic progression of length $k$ with common difference one or two. In this work, we aim to explore how the size of a graph (equivalently, the number of edges) influences the existence of admissible cycles or consecutive even cycles. 

In 2018, Liu and Ma \cite{12} established tight results on cycle lengths in graphs with a given minimum degree. They also proposed the following conjecture, which implies Erdős' question above when $k=2$.

\begin{conjecture}[Liu and Ma \cite{12}]\label{c2}
	Every graph with minimum degree at least $k+1$ contains $k$ admissible cycles
\end{conjecture}

In their work, Liu and Ma \cite{12} proved Conjecture \ref{c2} for bipartite graphs, and the general case under the relaxed minimum degree condition of $k+4$.

	Conjecture \ref{c2} was later completely resolved by Gao, Huo, Liu and Ma \cite{10}, who in fact settled several classical conjectures via a unified approach. This result is sharp, with sharpness confirmed by the clique $K_{k+2}$ and the complete bipartite graph $K_{k+1,n}$, and the corresponding statement is given as follows.

\begin{theorem}[Gao, Huo, Liu and Ma~\cite{10}]\label{ta}
	Every graph $G$ with minimum degree at least $k + 1$ contains $k$ admissible cycles.
\end{theorem}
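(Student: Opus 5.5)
This is the theorem of Gao, Huo, Liu and Ma; a complete proof is a substantial paper in its own right. I would follow their unified approach, whose core is a structural lemma on paths in ``rooted'' 2-connected graphs. The plan has four steps.

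\textbf{Step 1 (reduction to 2-connected graphs).} Take an end block $B$ of $G$ (a leaf block in the block--cutvertex tree) with cut vertex $x$. Every vertex of $B$ other than $x$ has all its neighbours inside $B$, so it has degree at least $k+1$ in $B$. Hence it suffices to find $k$ admissible cycles in a 2-connected graph $B$ in which every vertex except possibly one, $x$, has degree at least $k+1$.

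\textbf{Step 2 (choosing the structure).} I would pick an induced cycle $C$, or an edge $xy$, and take the extremal connected structure: either a longest path, or a component $H$ of $B - V(C)$ chosen to minimise its size. Since $B$ is 2-connected, there are two attachment vertices $a,b$ joining this structure to the rest of $B$.

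\textbf{Step 3 (the key lemma).} This is the main obstacle. Let $H$ be 2-connected, or more generally a connected graph with a specified pair of vertices $a,b$ such that every vertex other than $a,b$ has degree at least $k+1$. The lemma states that $H$ contains $a$--$b$ paths $P_1,\dots,P_k$ whose lengths form an arithmetic progression with common difference $1$ or $2$. I would prove it by induction on $|V(H)|$ via a minimal counterexample, in the following order.
\begin{itemize}
\item If $H$ has a cut vertex separating $a$ from $b$, split $H$ along its block chain and concatenate paths from the blocks.
\item If $H$ is bipartite, parity forces all $a$--$b$ paths to have the same parity. The target is then difference $2$, and I would use a longest-path argument in the style of P\'osa rotations, producing about $k+1$ neighbours along a path and hence paths of lengths $\ell,\ell+2,\dots$.
\item If $H$ is non-bipartite, use an odd cycle to switch parity and aim for difference $1$. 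The delicate case is when the endpoint neighbourhoods are interleaved.
\end{itemize}
Handling the degree deficiency at $a$ and $b$ is where the induction must be carefully strengthened. I would strengthen it to allow up to two low-degree terminals, and to ask for paths through a given edge at $a$.

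\textbf{Step 4 (closing cycles).} Given the paths from Step 3, connect $b$ back to $a$ by a fixed path $Q$ outside $H$, which exists by 2-connectivity. Then $P_i\cup Q$ are cycles whose lengths are shifted by the same constant $|Q|$, so the arithmetic progression and its common difference are preserved. This gives $k$ admissible cycles.

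Sharpness is confirmed by $K_{k+2}$ and $K_{k+1,n}$. These show the degree threshold $k+1$ cannot be lowered, so the induction in Step 3 has no slack. I expect the non-bipartite parity-switching argument in Step 3 to be the hardest part.
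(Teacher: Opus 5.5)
The paper does not prove this statement. It quotes it from \cite{10} and uses it only as a black box, to get the ordering with back-degree at most $k$ in the proof of Theorem~\ref{t1}. So your proposal has to stand on its own, and it does not: there is a genuine gap.

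\textbf{The key lemma is not proved.} Everything is pushed into the path lemma of Step~3, and that step is a list of intentions rather than an argument. The bipartite bullet invokes P\'osa-type rotations, but those control paths with one fixed end and a moving other end. They do not give paths between two \emph{fixed} terminals $a,b$. Turning the former into $k$ $a$--$b$ paths whose lengths form an arithmetic progression is exactly the content of the Liu--Ma and Gao--Huo--Liu--Ma theorems. The non-bipartite bullet contains no argument at all.

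\textbf{The lemma is false as you state it.} In the ``connected'' generality, take a triangle $abc$ and glue a copy of $K_{k+2}$ at $c$. Every vertex other than $a,b$ has degree at least $k+1$. Yet the only $a$--$b$ paths are $ab$ and $acb$, so for $k\ge 3$ there are no $k$ admissible ones. Your block-chain bullet only handles cut vertices that separate $a$ from $b$; blocks hanging off the chain break the statement. The hypothesis you need is a rooted $2$-connectivity condition such as ``$H+ab$ is $2$-connected.''

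\textbf{Steps~2 and~4 do not fit Step~3.} If $H$ is a component of $B-V(C)$, or a structure built around a longest path, its vertices may have most of their neighbours on $C$. They then need not have degree at least $k+1$ in $H$, so the lemma does not apply.

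A reduction that does work is simpler. In the end block $B$ with cut vertex $x$ (any vertex if $G$ is $2$-connected), pick a neighbour $y$ of $x$ and set $H=B-xy$. Then:
\begin{itemize}
\item every vertex of $H$ other than $x,y$ has degree at least $k+1$ in $H$;
\item $H+xy=B$ is $2$-connected;
\item every $x$--$y$ path in $H$ has length at least $2$.
\end{itemize}
So adding the edge $xy$ to $k$ admissible $x$--$y$ paths gives $k$ admissible cycles, with no separate path $Q$ needed. Since $B-xy$ need not be $2$-connected, this uses the lemma under the hypothesis ``$H+xy$ is $2$-connected.'' That version reduces to the $2$-connected case by fixing paths in all but one non-trivial block of the chain.

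With this reduction the theorem is exactly as hard as the rooted path lemma, which is the main result of \cite{10}. Your proposal supplies the easy reduction but not a proof of that lemma.
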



In light of Theorem \ref{ta}, it is natural to consider the corresponding size problem. In what follows, we establish our main result, providing a sharp size analogue of Theorem \ref{ta} together with a characterization of extremal graphs.

Let $f(n,k)=\begin{cases}
	\binom{k+1}{2}+\binom{n-k}{2}, &\text{if }k+2\leq n\leq 2k+1,\\
	k(n-k), &\text{if } n\geq 2k+2,
\end{cases}$

$\mathcal{H}_{n,k} = \{K_1\vee (K_{k}\cup K_{n-k-1}) \mid k+2\leq n\leq 2k+1 \} \cup \{K_{(k+1)/{2}}\vee bK_1 \mid b\in\{k,k+1\}, k\geq 3 \text{ is odd}\} \cup \{K_{k,n-k} \mid n\geq 2k\}$ for $k\geq 2$, and $\mathcal{H}_{n,1}=\{T \mid T \text{ is a tree of order } n\}$.

\begin{theorem}\label{t1}
	Let $G$ be a graph of order $n\geq k+2$. If $e(G)\geq f(n,k)$, then $G$ contains $k$ admissible cycles, unless $G$ is isomorphic to	one of the graphs in $\mathcal{H}_{n,k}$.
\end{theorem}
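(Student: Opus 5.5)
The plan is induction on $n$, with the minimum-degree result Theorem~\ref{ta} as the engine. We delete low-degree vertices one at a time until either Theorem~\ref{ta} applies or we reach a small graph where the bound forces a near-clique.

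\textbf{The case $k=1$.} Here $f(n,1)=n-1$. A graph with $e(G)\ge n-1$ contains a cycle unless it is a forest. A forest with $n-1$ edges is a tree. So $\mathcal H_{n,1}$ is exactly the exceptional family, and from now on we assume $k\ge 2$.

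\textbf{The deletion step.} Suppose $G$ has no $k$ admissible cycles. By Theorem~\ref{ta}, $G$ has a vertex $v$ with $d(v)\le k$. Put $G'=G-v$, so $e(G')\ge f(n,k)-k$. The key arithmetic fact is:
\begin{itemize}
\item if $n\ge 2k+3$, then $f(n,k)-k=f(n-1,k)$;
\item for $k+3\le n\le 2k+1$, $f(n,k)-f(n-1,k)=n-k-1\le k$;
\item at $n=2k+2$, $f(2k+2,k)-f(2k+1,k)=k(k+2)-\binom{k+1}{2}-\binom{k+1}{2}=k$.
\end{itemize}
So in the range $n\ge 2k+2$ we get $e(G')\ge f(n-1,k)$, and by induction $G'$ either contains $k$ admissible cycles (a contradiction) or is extremal. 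Equality is forced throughout: $d(v)=k$ and $e(G')=f(n-1,k)$. We then check which ways of attaching a degree-$k$ vertex to an extremal graph avoid $k$ admissible cycles.
\begin{itemize}
\item Attaching to $K_{k,n-k-1}$: the new vertex must join exactly the $k$-side, giving $K_{k,n-k}$. Any other attachment creates an odd cycle, or a chord, that produces both parities of cycle lengths, giving $k$ consecutive lengths.
\item Attaching to $K_{(k+1)/2}\vee bK_1$ or to $K_1\vee(K_k\cup K_{n-k-1})$ (the latter exists only for $n-1\le 2k+1$): we check case by case that $k$ admissible cycles appear. The one exception is $K_{(k+1)/2}\vee kK_1\to K_{(k+1)/2}\vee (k+1)K_1$, which stays in the family.
\end{itemize}

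\textbf{The small range $k+2\le n\le 2k+1$.} This needs a different argument, because $f$ drops by only $n-k-1$ when $n$ decreases. Write $m=n-k-1$.
\begin{itemize}
\item If a vertex of degree at most $m$ exists, delete it and induct on $n$ as above.
\item Otherwise $\delta(G)\ge n-k$. We also need direct handling of small graphs. The base case $n=k+2$ has $f=\binom{k+1}{2}+1$. Then $G$ has at least $\binom{k+1}{2}+1$ edges on $k+2$ vertices, and we must show $G$ contains cycles of lengths $3,\dots,k+2$ or of lengths in some $(k)$-AP, unless $G=K_1\vee(K_k\cup K_1)$, i.e.\ $K_{k+1}$ plus a pendant edge.
\item In this dense regime I would use pancyclicity-type results on the complement: the complement has at most $k$ edges. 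A Bondy-style argument (a Hamiltonian graph with $\ge n^2/4$ edges is pancyclic, or a direct path-rotation argument) gives cycles of all lengths $3,\dots,n-1$ when the graph is 2-connected. Failing 2-connectivity forces a cut vertex, and the edge count then forces the $K_1\vee(K_k\cup K_{n-k-1})$ structure.
\end{itemize}
For the intermediate case $\delta\ge n-k$ with $n\le 2k+1$, I would argue that $G$ is 2-connected with many edges, hence contains a long cycle with enough chords to give $k$ consecutive lengths (lengths $3$ through $k+2$), via Bondy's pancyclicity theorem applied to a Hamiltonian subgraph found by Dirac/Ore or Erdős–Gallai.

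\textbf{Main obstacle.} I expect the hardest part to be the exceptional family $K_{(k+1)/2}\vee bK_1$. It has $n=k+b+\ldots$ vertices in the transitional range $n\in\{2k+1,2k+2\}$ (with $\frac{k+1}{2}$ hub vertices) and meets the bound exactly, so the stability analysis must detect it. Concretely, one must show that when $\delta(G)\le k$ with equality cases everywhere, the only non-bipartite survivors are these joins. Their cycle lengths are limited because every cycle alternates through the $(k+1)/2$ hubs. I would handle this by classifying $G'$ near $n=2k+1$ explicitly: compute the cycle spectrum of each candidate, and verify that adding any degree-$k$ vertex other than the one extending the structure completes an AP of length $k$.
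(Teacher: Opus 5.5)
Your treatment of $n\ge 2k+2$ matches the paper. Theorem~\ref{ta} gives a vertex of degree at most $k$, $f(n,k)-k=f(n-1,k)$, and you check the attachments to $K_{k,n-k-1}$, and to $K_1\vee 2K_k$ and $K_{k,k+1}$ at $n=2k+2$. In the range $k+3\le n\le 2k+1$, deleting a vertex of degree at most $n-k-1$ is also sound, since $f(n,k)-f(n-1,k)=n-k-1$ and the attachment checks go through.

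\textbf{The gap: the branch $\delta(G)\ge n-k$.} This is where the paper's real work (Lemma~\ref{l4}) lies. There you must produce a cycle of length at least $k+2$, and none of the tools you name does this:
\begin{itemize}
\item For $n<2k$ we have $n-k<n/2$, so Dirac/Ore give nothing.
\item Dirac's bound $\min\{n,2\delta\}$ for 2-connected graphs only guarantees $2(n-k)$, which is below $k+2$ once $n\le(3k+1)/2$.
\item The Erd\H{o}s--Gallai threshold $(k+1)(n-1)/2$ exceeds $f(n,k)$ by $\frac12(2k+1-n)(n-k-1)>0$ for all $k+1<n<2k+1$.
\end{itemize}
Worse, the conclusion you aim for is false in this branch. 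For odd $k\ge 3$, the graph $K_{(k+1)/2}\vee kK_1$ has $n=(3k+1)/2$ vertices, is 2-connected, has $\delta=(k+1)/2=n-k$ and exactly $f(n,k)$ edges, yet has circumference $k+1$. Your ``main obstacle'' paragraph places this family near $n\in\{2k+1,2k+2\}$ and in the deletion branch, so the plan never meets it where it actually arises.

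What is needed is an extremal circumference theorem for 2-connected graphs together with its equality cases. The paper applies Kopylov's theorem (Lemma~\ref{Ko}) with $c=k+2$; its extremal graph $H_{n,k+2,(k+1)/2}$ is exactly this join. The paper treats connectivity one by moving a vertex between end-blocks and applying a block leaf transformation, again invoking Kopylov on a block of order at least $k+2$.

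\textbf{From a long cycle to all lengths.} Even with a cycle of length at least $k+2$ in hand, ``Bondy's theorem applied to a Hamiltonian subgraph'' needs that subgraph to have at least a quarter of its order squared edges, which you never establish. The paper instead uses Brandt's theorem (Lemma~\ref{l1}). In this range $f(n,k)\ge\lfloor n^2/4\rfloor>(n-1)^2/4+1$ for $n\ge 5$. So a non-bipartite $G$ has all cycle lengths from $3$ to $c(G)$, and a bipartite $G$ is forced to be $K_{\lfloor n/2\rfloor,\lceil n/2\rceil}$.

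\textbf{The base case $n=k+2$.} This is also wrong as stated. Cycles of lengths $3,\dots,n-1$ give only $k-1$ lengths; you need a Hamilton cycle. Besides $K_1\vee(K_k\cup K_1)$ there are the 2-connected exceptions $K_{2,2}$ (for $k=2$) and $K_2\vee 3K_1$ (for $k=3$), both in $\mathcal{H}_{n,k}$. Ore's edge condition for Hamiltonicity, with its exceptions, followed by Bondy's theorem repairs the base case. It does not repair the general branch $\delta\ge n-k$.
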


We note that for any graph $H \in \mathcal{H}_{n,k}$, we have $e(H)=f(n,k)$ and $H$ contains no $k$ admissible cycles, which shows that Theorem \ref{t1} is sharp.

\vspace{0.2cm}

In addition, the existence of consecutive even cycles in graphs is another important problem in extremal graph theory. Häggkvist and Scott~\cite{HS} showed that every graph with minimum degree $\Omega(k^2)$ contains $k$ cycles of consecutive even lengths. Later, Verstraëte~\cite{V2} improved this quadratic bound to a linear one, proving that every graph with average degree at least $8k$ and even girth $g$ contains $(g/2 - 1)k$ cycles of consecutive even lengths. In subsequent work, Sudakov and Verstraëte~\cite{SV} further strengthened the result by establishing an exponential bound: every graph with average degree at least $192(k+1)$ and girth $g$ contains $k^{\lfloor (g-1)/2 \rfloor}$ cycles of consecutive even lengths. Verstra\"ete~\cite[Theorem 12]{V1} proved that every graph with at least $3kn$ edges contains $k$ cycles with consecutive even lengths. 

Recently, Liu and Ma \cite{12} established the minimum degree condition for the existence of the $\lfloor k/2 \rfloor$ cycles with consecutive even lengths. This minimum degree condition is best possible, as shown by the complete graph $K_{k+2}$.

\begin{theorem}[Liu and Ma \cite{12}]\label{tb}
	Every graph $G$ with minimum degree at least $k + 1$ contains $\lfloor k/2\rfloor$ cycles with consecutive even lengths.
\end{theorem}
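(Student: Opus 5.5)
The plan is to deduce the statement from Theorem~\ref{ta} via a spanning bipartite subgraph. In a bipartite graph every cycle is even, so $k$ admissible cycles there must have common difference two. That is, they are $k$ cycles of consecutive even lengths. It therefore suffices to find a bipartite subgraph $H$ of $G$ with $\delta(H)\ge \lfloor k/2\rfloor+1$ and apply Theorem~\ref{ta} to $H$ with $\lfloor k/2\rfloor$ in place of $k$. If $\lfloor k/2\rfloor=0$ the statement is vacuous, so assume $k\ge 2$.

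\emph{Step 1 (max-cut).} Choose a partition $V(G)=A\cup B$ maximizing the number of edges between $A$ and $B$, and let $H$ be the bipartite spanning subgraph formed by these crossing edges. Suppose some vertex $v$ had fewer than $d_G(v)/2$ neighbours on the opposite side. Moving $v$ to the other part would strictly increase the number of crossing edges, contradicting maximality. Hence $d_H(v)\ge \lceil d_G(v)/2\rceil$ for every $v$. Since $d_G(v)\ge k+1$, this gives
\[
\delta(H)\ \ge\ \left\lceil \tfrac{k+1}{2}\right\rceil \;=\; \left\lfloor \tfrac{k}{2}\right\rfloor + 1 .
\]
The equality is checked separately for $k$ even and $k$ odd.

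\emph{Step 2 (apply Theorem~\ref{ta}).} Set $t=\lfloor k/2\rfloor\ge 1$. Since $\delta(H)\ge t+1$, Theorem~\ref{ta} yields $t$ admissible cycles $C_1,\dots,C_t$ in $H$. All their lengths are even because $H$ is bipartite. The common difference of the progression is one or two, and it cannot be one, since two even numbers never differ by one. (When $t=1$ a single even cycle trivially qualifies.) So the lengths $|C_1|,\dots,|C_t|$ are consecutive even integers. These cycles lie in $G$ as well, which proves the statement.

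I expect no real obstacle. The only points needing care are the parity bookkeeping $\lceil (k+1)/2\rceil=\lfloor k/2\rfloor+1$ and the observation that admissibility inside a bipartite graph forces difference two. All the substantive difficulty is carried by Theorem~\ref{ta}. Liu and Ma's own bipartite case of Conjecture~\ref{c2} would equally suffice here, since $H$ is bipartite.
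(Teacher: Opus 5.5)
Your proof is correct. The max-cut step gives $d_H(v)\ge\lceil d_G(v)/2\rceil\ge\lceil (k+1)/2\rceil=\lfloor k/2\rfloor+1$. In a bipartite graph every cycle is even, so an admissible family must have common difference two. Theorem~\ref{ta} applied to $H$ with $\lfloor k/2\rfloor$ in place of $k$ then finishes the argument.

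There is nothing to compare against, because the paper gives no proof of this statement and simply quotes it from Liu and Ma. Your reduction to a bipartite subgraph is the standard way to obtain this statement from the bipartite case of Conjecture~\ref{c2}, which Liu and Ma proved in that same source. As you note, using that bipartite result instead of the later and much heavier Theorem~\ref{ta} is the more economical choice, and it keeps the derivation within the original reference.
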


In 2016, Verstra\"ete~\cite{V1} posed the following conjecture in his survey paper.

\begin{conjecture}[Verstra\"ete~\cite{V1}]\label{c1}
	If $G$ is an $n$-vertex graph not containing $k$ cycles of consecutive even lengths, 
	then $e(G)\leq (2k+1)(n-1)/2$, with equality only if every block of $G$ 
	is a clique of order $2k+1$.
\end{conjecture}

In Conjecture~\ref{c1}, the case $k=1$ is trivial, the case $k=2$ was proved by Gao, Li, Ma and Xie~\cite{GM}, and the remaining cases are still open.

As a byproduct of our lemma, we verify Conjecture \ref{c1} for $2k+2\leq n\leq 4k+1$, 
since $(2k+1)(n-1)/2\geq \binom{2k+1}{2}+\binom{n-2k}{2}$. In fact, we obtain a stronger result in this range as follows.

\begin{theorem}\label{t3}
	Let $G$ be a graph of order $n$ with $2k+2\leq n\leq 4k+1$. If $e(G)\geq 	\binom{2k+1}{2}+\binom{n-2k}{2}$, then $G$ contains cycles of every length $4,6,8,\dots,2k+2$, unless $G\cong K_1\vee(K_{2k}\cup K_{n-2k-1})$.
\end{theorem}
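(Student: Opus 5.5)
Our approach is induction on $n$ with a minimum-degree deletion step, in the style of the Erdős--Gallai/Woodall size arguments for long cycles, combined with a pancyclicity-type statement for dense graphs. Put $g(n)=\binom{2k+1}{2}+\binom{n-2k}{2}$. Then $g(n)-g(n-1)=n-2k-1$. For the base case $n=2k+2$ we have $g(n)=\binom{2k+1}{2}+1$, which equals $\binom{2k+2}{2}-2k$. So $G$ is $K_{2k+2}$ with at most $2k$ edges removed.

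\textbf{Base case.} If $\delta(G)\geq 2$, I expect a direct count to work: either a Chvátal/Bondy-type condition gives a Hamiltonian cycle and pancyclicity, or an even cycle is built greedily from the very dense complement-sparse structure. If $\delta(G)\le 1$, then the edge count forces $G-v$ to be $K_{2k+1}$ and $v$ to have exactly one neighbour, that is, $G\cong K_1\vee(K_{2k}\cup K_1)$. This is the exceptional graph for $n=2k+2$.

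\textbf{Inductive step.} Let $v$ be a vertex of minimum degree.

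\emph{Case $d(v)\le n-2k-1$.} Then $e(G-v)\ge g(n-1)$. If $n-1\ge 2k+2$, induction applies to $G-v$: either $G-v$ already contains all the cycles, or $G-v\cong K_1\vee(K_{2k}\cup K_{n-2k-2})$ and equality holds throughout.

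In the equality case $v$ has exactly $n-2k-1$ neighbours in $H=G-v$. Let $u$ be the cut vertex of $H$, $A$ the $K_{2k}$ part, and $B$ the $K_{n-2k-2}$ part. If $v$ has neighbours in both $A$ and $B$, we get a long cycle through $u$ and $v$ that combines both blocks. Paths of every length inside $A\cup\{u\}$ then allow us to adjust the parity, which yields all even lengths $4,\dots,2k+2$. Otherwise $N(v)\subseteq B\cup\{u\}$ or $N(v)\subseteq A\cup\{u\}$. By degree counting with $d(v)=\delta(G)$, the first option gives $G\cong K_1\vee(K_{2k}\cup K_{n-2k-1})$. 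In the second option, $v$ has at least two neighbours in $A\cup\{u\}$ and $K_{2k+1}$ plus $v$ gives a cycle of length $2k+2$. Since $K_{2k+1}$ already contains all cycles of length at most $2k+1$, $G$ contains all required cycles.

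\emph{Case $d(v)\ge n-2k$.} Then $\delta(G)\ge n-2k$. This is weak when $n$ is close to $2k+2$, so instead I will use the edge count directly. The graph has average degree close to $2k$ and order at most $4k+1$. I would apply a Woodall-type circumference bound: a graph with more than $\binom{2k+1}{2}+\binom{n-2k}{2}$ edges (or equal, outside the extremal family) has a cycle of length at least $2k+2$ in its $2$-connected part. This is exactly the form of Woodall's theorem and Kopylov's refinement for $2$-connected graphs.

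\textbf{Main obstacle.} The hardest step is passing from a long cycle to cycles of \emph{all} even lengths $4,\dots,2k+2$. For this I will use the density: a block with many edges relative to its order has a long cycle $C$ with many chords. Then a Bondy-style chord argument (or the Gao--Huo--Liu--Ma machinery behind Theorem~\ref{ta}, via a subgraph of minimum degree at least $k+1$ extracted from the dense block) yields consecutive lengths. Concretely, I will first try deleting vertices of degree at most $k$ iteratively. Each deletion loses at most $k$ edges, and $g(n)$ exceeds $k(n-k-1)$ by enough to force a nonempty core with $\delta\ge k+1$ and many vertices. On this core I would use a Liu--Ma-type path-extension argument to obtain cycles of every even length up to $2k+2$. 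Theorem~\ref{tb} alone gives only $\lfloor k/2\rfloor$ lengths, so the real work lies in strengthening it using the order bound $n\le 4k+1$ to pin the lengths down to exactly $4,\dots,2k+2$.
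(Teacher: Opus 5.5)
\textbf{There is a genuine gap in the high-minimum-degree case.}

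Your low-degree case is sound. If $\delta(G)\le n-2k-1$, deleting a minimum-degree vertex and inducting works. Your analysis of how $v$ attaches to $K_1\vee(K_{2k}\cup K_{n-2k-2})$ is also correct. The base case goes through too, e.g.\ via the Ore--Bondy edge condition for Hamiltonicity plus Bondy's pancyclicity theorem, although you only say you ``expect'' it.

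The gap is in the case $\delta(G)\ge n-2k$, exactly at the step you call the main obstacle: passing from a cycle of length at least $2k+2$ to cycles of \emph{every} even length $4,6,\dots,2k+2$. The route you sketch cannot do this.
\begin{itemize}
\item Peeling off vertices of degree at most $k$ (or $2k$) and invoking Theorem~\ref{ta}, Theorem~\ref{tb}, or a Liu--Ma path-extension argument only yields admissible or consecutive even lengths in some uncontrolled window. Nothing anchors the progression at $4$, and ``strengthening it using $n\le 4k+1$'' is not an argument.
\item A chord argument on the long cycle $C$ in the style of Bondy does not apply directly either. $C$ need not be Hamiltonian, and $G[V(C)]$ need not satisfy the density hypothesis of Bondy's theorem.
\end{itemize}

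\textbf{The missing ingredient is a global weak-pancyclicity result: Brandt's theorem (Lemma~\ref{l1}).} It says every non-bipartite graph with more than $(n-1)^2/4+1$ edges contains cycles of all lengths $3,\dots,c(G)$. In your range, minimizing the quadratic in $2k$ gives $g(n)=\binom{2k+1}{2}+\binom{n-2k}{2}\ge (n^2-1)/4$. Hence $e(G)\ge\lfloor n^2/4\rfloor>(n-1)^2/4+1$. If $G$ is bipartite, $e(G)\ge g(n)$ is only possible for $n\in\{4k,4k+1\}$, and it forces $G\cong K_{2k,2k}$ or $K_{2k,2k+1}$. Both contain all even cycles up to $4k\ge 2k+2$.

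With this in hand, no induction is needed. The theorem reduces to showing $c(G)\ge 2k+2$ unless $G\cong K_1\vee(K_{2k}\cup K_{n-2k-1})$. That is precisely the paper's proof: Lemma~\ref{l4} with $k$ replaced by $2k$. There the bipartite exceptions contain the required even cycles, and the Kopylov-type exceptions need an odd parameter, so they do not occur.

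A secondary point: the circumference step, including the equality case $e(G)=g(n)$, is not settled by simply citing Woodall's bound. The paper proves it with Kopylov's theorem (Lemma~\ref{Ko}) when $G$ is $2$-connected; since $2k+2$ is even, the Kopylov extremal graphs have strictly fewer edges than $g(n)$. When $G$ has a cut vertex, the paper uses an edge-shifting and block-leaf argument. Your appeal to Woodall ``or equal, outside the extremal family'' needs the same kind of justification.
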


In the process of proving our main result, we also obtain an improvement on the lower bound $3kn$ in the result of Verstraëte \cite[Theorem 12]{V1}, bringing us one step closer to Conjecture \ref{c1}.
\begin{theorem}\label{t5}
	Let $G$ be a graph of order $n$ with $n\geq 2k+2$. If $e(G)\geq 2k(n-k-\frac12)$, then $G$ contains $k$ cycles of consecutive even lengths.
\end{theorem}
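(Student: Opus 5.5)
We argue by induction on $n$ (with $k$ fixed), deleting low-degree vertices, and use Theorem~\ref{tb} as the base engine.

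\textbf{The base engine.} Theorem~\ref{tb} with $k$ replaced by $2k$ says that every graph of minimum degree at least $2k+1$ contains $k$ cycles of consecutive even lengths. So it suffices to show that $G$ has a nonempty subgraph of minimum degree at least $2k+1$, or else to find the cycles directly.

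\textbf{Reduction step.} Suppose $e(G)\ge 2k(n-k-\tfrac12)$ with $n\ge 2k+2$. If $\delta(G)\ge 2k+1$ we are done. Otherwise take a vertex $v$ with $d(v)\le 2k$ and set $G'=G-v$. Then
\[
e(G')\ge 2k(n-k-\tfrac12)-2k=2k\bigl((n-1)-k-\tfrac12\bigr).
\]
Hence the hypothesis is preserved as long as $n-1\ge 2k+2$, and induction applies. Iterating, we either reach a subgraph of minimum degree at least $2k+1$ or we arrive at order exactly $2k+1$.

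\textbf{Boundary case.} The remaining case is a graph $H$ with $n=2k+2$ and
\[
e(H)\ge 2k(k+\tfrac32)=2k^2+3k,
\]
while $\binom{2k+2}{2}=2k^2+3k+1$. So $H$ is $K_{2k+2}$ minus at most one edge. Such a graph is Hamiltonian-rich: it contains $K_{2k+1}$ and so has cycles of all lengths $3,\dots,2k+1$. In particular it has the even lengths $4,6,\dots,2k$, which are $k-1$ consecutive even cycles. We also need one more even length, namely $2k+2$. A Hamiltonian cycle exists in $K_{2k+2}$ minus one edge (for $2k+2\ge4$), so $H$ has all even lengths $4,\dots,2k+2$, giving $k$ of them. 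Alternatively, when $H$ itself has minimum degree at least $2k+1$ we simply apply Theorem~\ref{tb}. The induction must be arranged so that this base case is reached, rather than deleting below order $2k+2$. That is automatic: we stop deleting when $n=2k+2$, since the inequality at that order is exactly the base case.

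\textbf{Main obstacle.} The delicate point is verifying the boundary case, including the case $k=1$. For $k=1$ the bound is $e\ge 2(n-\tfrac32)=2n-3$, and we need a single even cycle. At $n=4$ we have $5$ edges, giving $K_4^-$, which contains $C_4$. The general inductive step is routine arithmetic. If instead the authors intend a sharper use of their lemma behind Theorem~\ref{t3}, one could alternatively use Theorem~\ref{t3} at $n=2k+2$. Its threshold $\binom{2k+1}{2}+1=2k^2+k+1$ is below $2k^2+3k$, and its exception has exactly $\binom{2k+1}{2}+\binom{1}{2}+\dots$ edges, which is fewer than $2k^2+3k$. So Theorem~\ref{t3} also closes the base case.
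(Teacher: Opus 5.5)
Your proof is correct and takes essentially the same route as the paper. The paper phrases your vertex deletion as a degeneracy ordering: it applies Theorem~\ref{tb} with $2k$ in place of $k$, so that each vertex has at most $2k$ earlier neighbours, and inducts on $n$ from the base case $n=2k+2$, where the edge count forces $G-v_n\cong K_{2k+1}$ with $\deg_G(v_n)=2k$; this is exactly your ``$K_{2k+2}$ minus at most one edge'' observation. (One small slip: the iteration must stop at order $2k+2$, not $2k+1$ as you first wrote, since at order $2k+1$ the bound only gives $K_{2k+1}$, which has just $k-1$ even cycle lengths; you correct this yourself in the boundary-case paragraph.)
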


According to Mantel's Theorem \cite{WM}, if $e(G)\geq \lfloor n^2/4 \rfloor$, then $G$ has a cycle of length $3$, unless $G \cong K_{\lceil \frac n2 \rceil, \lfloor \frac n2 \rfloor}$. This is one of the earliest results in extremal graph theory. By applying our lemma, we obtain a theorem that serves as a strengthening of this classical result.

\begin{theorem}\label{t2}
	Let $G$ be a graph of order $n$. If $e(G)\geq \lfloor n^2/4 \rfloor$, then $G$ has cycles of every length $3,4,\dots,\lfloor n/2 \rfloor+2$, unless $G$ is isomorphic to $K_{\lceil \frac n2 \rceil, \lfloor \frac n2 \rfloor}$, $K_1\vee(K_{\lceil \frac{n-1}{2} \rceil}\cup K_{\lfloor \frac{n-1}{2} \rfloor})$ or $K_2\vee 4K_1$.
\end{theorem}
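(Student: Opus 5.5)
We argue by induction on $n$, using a minimum-degree reduction and the classical pancyclicity/Bondy-type results together with the ``lemma'' the paper alludes to. Write $g(n)=\lfloor n^2/4\rfloor$ and note $g(n)-g(n-1)=\lfloor n/2\rfloor$. For small $n$ (say $n\le 6$) we check the statement directly; the exceptional graph $K_2\vee 4K_1$ (with $n=6$, $9=g(6)$ edges, no $C_5$) shows up exactly here.

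\textbf{Deletion step.} Suppose $G$ has a vertex $v$ with $d(v)\le\lfloor n/2\rfloor$. Then
\[
e(G-v)\ge g(n)-\lfloor n/2\rfloor=g(n-1).
\]
By induction, $G-v$ contains cycles of all lengths $3,\dots,\lfloor (n-1)/2\rfloor+2$, unless $G-v$ is one of the three exceptional graphs of order $n-1$. If $n$ is even, then $\lfloor(n-1)/2\rfloor=\lfloor n/2\rfloor-1$, so we are missing only the length $\lfloor n/2\rfloor+2$. We can also use the excess $e(G)-g(n)+\lfloor n/2\rfloor-d(v)\ge0$. In the tight case $G-v$ has exactly $g(n-1)$ edges, and we must show $v$ together with $G-v$ creates the longer cycle. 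The exceptional cases for $G-v$ are handled by hand: adding a vertex of degree about $n/2$ to $K_{a,b}$ or to $K_1\vee(K_a\cup K_b)$ either yields all the required lengths, or gives exactly $K_{\lceil n/2\rceil,\lfloor n/2\rfloor}$ or $K_1\vee(K_{\lceil (n-1)/2\rceil}\cup K_{\lfloor (n-1)/2\rfloor})$.

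\textbf{Dense step.} Suppose instead $\delta(G)\ge\lfloor n/2\rfloor+1>n/2$. By Dirac's theorem $G$ is Hamiltonian, and $e(G)\ge g(n)$. Bondy's theorem then says $G$ is pancyclic unless $G\cong K_{n/2,n/2}$, and the latter is impossible since $\delta(G)>n/2$. So $G$ has every length from $3$ to $n$, which covers $3,\dots,\lfloor n/2\rfloor+2$.

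\textbf{Main obstacle.} The hard part is the deletion step when $d(v)$ is exactly $\lfloor n/2\rfloor$ and $n$ is even. There the inductive hypothesis gives cycles only up to length $n/2+1$, and we must produce a cycle of length $n/2+2$. My first attempt would be to apply the same inductive argument to a different structure: use a lemma guaranteeing long paths between neighbours of $v$. Concretely, find two neighbours $x,y$ of $v$ joined in $G-v$ by paths of every length $1,\dots,n/2$. A graph with $g(n-1)$ edges should contain a dense 2-connected block with minimum degree about $n/4$, and in such a block Erdős–Gallai-type path-length results supply these paths. Closing a path of length $n/2$ through $v$ gives the missing $C_{n/2+2}$.

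If that fails, I would strengthen the induction hypothesis itself, to paths of all lengths between many vertex pairs, so that the missing length propagates through the induction. The equality analysis in this step is also where the structure $K_1\vee(K_a\cup K_b)$ is forced, because $v$ is then a cut vertex.
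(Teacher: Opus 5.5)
\textbf{There is a genuine gap, at exactly the point you flag, and it is wider than you describe.} Your dense step (Dirac plus Bondy) and the odd-$n$ deletion step are fine.

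\emph{The gap.} When $n=2m$ is even, the induction hypothesis applied to $G-v$ (of order $2m-1$) can only ever give cycle lengths up to $m+1$, however many edges $G-v$ has. So a $C_{m+2}$ must be produced for every $d(v)\le m$, not just for $d(v)=m$. The excess $e(G)-g(n)+m-d(v)$ is never actually used. Producing this $C_{m+2}$ is the real content of the theorem: it is a circumference bound. Your remedy is only asserted. Nothing shows that a dense $2$-connected block of minimum degree about $n/4$ exists, that it contains two neighbours of $v$, or that a path of length exactly $m$ between them can be found.

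\emph{Why a soft argument will not do.} Off-the-shelf Erd\H{o}s--Gallai bounds are too weak: Lemma~\ref{l3} only yields a cycle of length at least $m+2$ on $2m$ vertices once $e(G)\ge (m+1)(2m-1)/2>m^2$. Moreover, the implication you need is actually false at $n=6$. Take $G=K_2\vee 4K_1$ and $v$ of degree $2$. Then $G-v=K_2\vee 3K_1$ is not among the order-$5$ exceptions, has $7>\lfloor 25/4\rfloor$ edges, and contains $C_3$ and $C_4$, yet $G$ has no $C_5$. So a correct even step must contain a sharp count that rules out such extremal configurations for all larger $n$. Saying you would ``strengthen the induction hypothesis'' does not supply one.

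\emph{How the paper proceeds.} The paper does not use induction here. With $k=\lfloor n/2\rfloor$ one has $k+2\le n\le 2k+1$ and $\binom{k+1}{2}+\binom{n-k}{2}=\lfloor n^2/4\rfloor$. So Lemma~\ref{l4} applies directly, and its exceptional list reduces to the three graphs in the statement. Inside Lemma~\ref{l4}:
\begin{itemize}
\item Brandt's theorem (Lemma~\ref{l1}) reduces all the required lengths to the single condition $c(G)\ge k+2$ for non-bipartite $G$.
\item This circumference bound is proved for an edge-maximal counterexample. An edge-shifting argument forces a block of order at least $k+1$ when $G$ has a cut vertex.
\item Kopylov's theorem (Lemma~\ref{Ko}) is then applied to that block, or to $G$ itself when $G$ is $2$-connected. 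The exception $K_2\vee 4K_1=H_{6,5,2}$ arises there as a Kopylov extremal graph.
\end{itemize}
Weak pancyclicity combined with a $2$-connected circumference bound is the ingredient missing from your plan.
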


The paper is organized as follows. In Section 2, we introduce the notation and tools required for later proofs. In Section 3, we give proofs of Theorems \ref{t1}, \ref{t3}, \ref{t5} and \ref{t2}.
		
\section{Preliminaries}		
		
		Throughout this paper, we consider finite simple graphs and use standard terminology and notation from \cite{2,7}. Let $G$ be a graph with vertex set $V(G)$ and edge set $E(G)$, where $n=|V(G)|$ and $e(G)=|E(G)|$ are called the \emph{order} and \emph{size} of $G$, respectively. 
		
		Let $G_1$ and $G_2$ be vertex disjoint graphs. The \emph{union} $G_1\cup G_2$ is the graph with vertex set $V(G_1)\cup V(G_2)$ and edge set $E(G_1)\cup E(G_2)$. For any positive integer $t$, let $tG$ denote the disjoint union of $t$ copies of $G$. The \emph{join} $G_1 \vee G_2$ is derived from $G_1 \cup G_2$ by joining every vertex of $G_1$ with every vertex of $G_2$ by an edge. We denote by $K_n$ the complete graph on $n$ vertices, and by $K_{m,n}$ the complete bipartite graph with parts of sizes $m$ and $n$.
		
		We use $|G|$ to denote the order of a graph $G$. For $v \in V(G)$, let $N_G(v)$ and $\deg_G(v)$ denote the neighborhood and degree of $v$. 
		An $s$-cycle is a cycle of length $s$. 
		
		For a subset $S \subseteq V(G)$, we denote by $G[S]$ the subgraph of $G$ induced by $S$, and by $G - S$ the subgraph obtained from $G$ by deleting the vertices in $S$ and their incident edges. If $S = \{v\}$, we write $G - v$ for $G - \{v\}$.
		
		A \emph{cut-vertex} of a graph $G$ is a vertex whose deletion increases the number of connected components of $G$. A \emph{block} of $G$ is a maximal subgraph that is itself connected and contains no cut-vertex. A block $B$ is termed an \emph{end-block} of $G$ if it contains at most one cut-vertex of $G$. 
		
	We use the following transformation on the block structure of a graph.
	
	\begin{definition}[Block Leaf Transformation]\label{d1}
		Let $G$ be a connected graph with block set $\mathcal{B}=\{B_1,\dots,B_m\}$. For $B\in\mathcal{B}$, we define $\mathcal{L}(G,B)$ as a connected graph $G^*$ such that $G^*=G$ if $G$ is $2$-connected, and otherwise the block set of $G^*$ is $\mathcal{B}$ and $B$ is an end-block of $G^*$. We call $G^*$ a block leaf transformation of $G$ with respect to $B$.
	\end{definition}
	
	\begin{observation}\label{o1}
		Let $G^*=\mathcal{L}(G,B)$. Then $|G|=|G^*|$ and $e(G)=e(G^*)$.
		Moreover, $G$ and $G^*$ contain the same set of cycles. Such a transformation always exists for every connected graph $G$ and
		is in general not unique. 
	\end{observation}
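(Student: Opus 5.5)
The statement to prove is Observation~\ref{o1}. I would treat its three claims separately, all from the block--cut-vertex structure of a connected graph.

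For the equalities $|G|=|G^*|$ and $e(G)=e(G^*)$, I would use two standard facts. First, every edge of a graph lies in exactly one block. Second, the block--cut-vertex graph of a connected graph is a tree. Counting vertices through this tree gives
\[|G|=\sum_{i=1}^m |B_i|-(m-1),\]
because the blocks overlap only in cut-vertices, and gluing $m$ blocks along a tree structure identifies exactly $m-1$ pairs of vertices. The same formula holds for $G^*$, since $G^*$ is connected and has the same block set $\mathcal{B}$. Similarly,
\[e(G)=\sum_i e(B_i)=e(G^*).\]
In the $2$-connected case, $G^*=G$ and there is nothing to prove.

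For the claim about cycles, I would use that every cycle is $2$-connected and therefore lies entirely inside a single block. So the cycles of $G$ are exactly the union over $i$ of the cycles of $B_i$, and the same holds for $G^*$. Since the two graphs have the same blocks, they have the same set of cycles, understood up to the identification of each block $B_i$ with its copy in $G^*$.

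For existence, I would give an explicit construction when $G$ is not $2$-connected. Take the blocks $B_1,\dots,B_m$ with $B_1=B$, and form $G^*$ by chaining them. Choose a vertex $x_1$ of $B_1$. For $i\ge 2$, identify one vertex of $B_i$ with a vertex of $B_{i-1}$ that is not the vertex used to attach $B_{i-1}$ to $B_{i-2}$. This is possible because every block has at least two vertices; I am assuming $G$ has at least one edge, so no block is $K_1$.

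The resulting graph is connected. Each $B_i$ is $2$-connected or a $K_2$. The glued vertices are cut-vertices, since the block--cut structure is a path. So the blocks of $G^*$ are exactly the $B_i$, and $B=B_1$ contains only one cut-vertex, making it an end-block.

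For non-uniqueness, I would note that the attachment vertices can generally be chosen in different ways, producing non-isomorphic graphs. For instance, with three triangles one gets either a path of triangles or all three sharing a common vertex. This is an example rather than a claim requiring proof.

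The only step needing care is verifying that the blocks of the constructed $G^*$ are precisely the $B_i$. I would argue this by noting that any subgraph of $G^*$ meeting two different $B_i$'s has a cut-vertex at a glued vertex, so no block of $G^*$ can extend beyond a single $B_i$.
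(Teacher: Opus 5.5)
Your argument is correct and complete. The paper states this as an observation without any proof, and your verification is exactly the routine reasoning it leaves implicit: edges and cycles each lie in a single block, the vertex count follows from the block--cut-vertex tree, and re-gluing the blocks into a path with $B$ at one end gives existence. You were also right to read ``the block set of $G^*$ is $\mathcal{B}$'' up to isomorphism; the literal reading would force $G^*=G$ and make the transformation impossible whenever $B$ is not already an end-block.
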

		
		The \emph{circumference}, denoted by $c(G)$, is the length of a longest cycle of $G$. In 1997, Brandt \cite{3} proved the following sufficient condition for a graph to contain cycles of every length from $3$ to $c(G)$. 
		
		\begin{lemma}[Brandt \cite{3}]\label{l1}
			Every non-bipartite graph $G$ of order $n$ with more than $(n-1)^2/4 + 1$ edges contains cycles of every length $s$, where $3 \leq s \leq c(G)$.
		\end{lemma}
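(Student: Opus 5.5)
The plan is a minimal-counterexample argument that combines a longest cycle with Bondy's pancyclicity theorem: a Hamiltonian graph on $m$ vertices with at least $m^2/4$ edges is pancyclic unless it is $K_{m/2,m/2}$. Let $G$ be a counterexample of smallest order $n$. Then $G$ is non-bipartite, $e(G)>(n-1)^2/4+1$, and some length $s$ with $3\le s\le c(G)$ is missing. Deleting isolated vertices and handling components separately only helps the inequality, so I assume $G$ is connected.

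\textbf{Step 1 (low-degree vertices).} Since $\frac{(n-1)^2}{4}-\frac{(n-2)^2}{4}=\frac{2n-3}{4}$, a vertex $v$ with $\deg(v)\le (2n-3)/4$ can be deleted and $G-v$ still satisfies the edge hypothesis. If $G-v$ is non-bipartite and $c(G-v)=c(G)$, minimality finishes the proof. So I would first try to choose such a $v$ outside some longest cycle $C$, and not the only vertex destroying bipartiteness. If $G-v$ is bipartite, then $e(G-v)\le (n-1)^2/4$, which forces $\deg(v)>1$. This leaves a very rigid near-complete-bipartite structure, and there I would find the short odd cycles through $v$ directly.

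\textbf{Step 2 (vertices off a longest cycle).} After Step 1, I may assume the vertices off a longest cycle $C$ (with $|C|=c$) all have large degree, or that $C$ is Hamiltonian. Take $x\notin V(C)$. Maximality of $C$ says that $x$ has no two neighbours consecutive on $C$, and more strongly that no two neighbours of $x$ are at distance less than $2$ along $C$. Hence $x$ has at most $c/2$ neighbours on $C$. Count the edges in three parts: inside $V(C)$, between $V(C)$ and $R=V(G)\setminus V(C)$, and inside $R$. This gives
\[
e(G)\le e(G[V(C)])+\tfrac{c}{2}(n-c)+\tbinom{n-c}{2}.
\]
Comparing with $(n-1)^2/4+1$ should force $e(G[V(C)])\ge c^2/4$ whenever $c<n$. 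Bondy's theorem applied to the Hamiltonian graph $G[V(C)]$ then yields all lengths $3,\dots,c$, unless $G[V(C)]\cong K_{c/2,c/2}$.

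\textbf{Step 3 (the bipartite exception).} It remains to treat $G[V(C)]\cong K_{c/2,c/2}$ and the Hamiltonian case $c=n$ with $G=K_{n/2,n/2}$. The Hamiltonian case is excluded because $G$ is non-bipartite. In the first case the edge count shows that some vertex off $C$ has at least two neighbours on $C$. Since $G[V(C)]$ is complete bipartite, its neighbours must lie on both sides, or else it extends $C$. This creates odd cycles of all required lengths through that vertex, or contradicts the maximality of $C$.

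\textbf{Main obstacle.} The hardest part is the bookkeeping in Step 2. The crude bound $\binom{n-c}{2}$ on the edges inside $R$ is too weak when $c$ is small relative to $n$. There I would instead use that $G[R]$ has no cycle longer than $c$: by Erd\H{o}s--Gallai, $e(G[R])\le \frac{c}{2}(n-c-1)$. I would also sharpen the bound on the cross edges using the spacing of each vertex's neighbours along $C$.
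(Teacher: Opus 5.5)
The paper quotes this lemma from Brandt without proof, so your argument has to stand on its own. It has a genuine gap at its centre. Bondy's theorem needs $e\ge m^2/4$ on a Hamiltonian graph of order $m$, but the hypothesis here only gives $e(G)>(n-1)^2/4+1$, roughly $n/2$ edges less. In the Hamiltonian case $c=n$, your Step 3 dismisses everything except $K_{n/2,n/2}$, but Bondy's theorem simply does not apply there. For example, take $K_{m,m}$ with $m\ge 3$, delete $m-1$ edges of a perfect matching, and add one edge inside a part. The result is Hamiltonian and non-bipartite, and has $m^2-m+2>(2m-1)^2/4+1$ edges, yet fewer than $m^2=n^2/4$. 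Step 1 never touches this graph, because every vertex lies on the longest cycle. Deleting a vertex of a Hamiltonian graph can also destroy length $n-1$, so minimality does not rescue it either. This Hamiltonian case is exactly the theorem of H\"aggkvist, Faudree and Schelp: a non-bipartite Hamiltonian graph with more than $(n-1)^2/4+1$ edges is pancyclic. It is the real content of the statement and needs an argument that uses non-bipartiteness substantively, which your plan does not supply.

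The non-Hamiltonian part also fails as written.

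\textbf{Step 2.} Put $r=n-c$. With at most $c/2$ neighbours on $C$ per outside vertex, your count yields only $e(G[V(C)])>c^2/4-c/2+1-(r^2-1)/4$. This is below $c^2/4$ already for $r=1$, where $G[R]$ is edgeless and Erd\H{o}s--Gallai is irrelevant, so the Bondy target $c^2/4$ is unreachable. The sensible target is $e(G[V(C)])>(c-1)^2/4+1$ together with non-bipartiteness of $G[V(C)]$, so that minimality (not Bondy) applies and everything reduces to the Hamiltonian case above. But for $r\ge 2$ even this weaker bound needs more than the crude count, and you never address non-bipartiteness of $G[V(C)]$.

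\textbf{Step 3.} The logic is reversed. Suppose $G[V(C)]\cong K_{c/2,c/2}$ and $x\notin V(C)$ had neighbours $u,w$ on opposite sides. Then $x$ together with a Hamiltonian $u$--$w$ path of $K_{c/2,c/2}$ would be a $(c+1)$-cycle, contradicting maximality of $C$. So all neighbours of $x$ on $C$ lie on one side, and every cycle through $x$ and $C$ is even. The odd lengths cannot come from $x$ in the way you describe, and this case needs a different argument.
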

		
		We remark that Tang and Zhan \cite{TZ} have recently strengthened Lemma \ref{l1}.
		
		The following classical result of Erdős and Gallai \cite{PE2} gives a sharp size threshold that ensures a graph to contain a cycle longer than a given length, with the exceptional extremal graphs characterized.
		
		\begin{lemma}[Erdős and Gallai \cite{PE2}]\label{l3}
			Let $G$ be a graph of order $n$ and let $c$ be an integer. If $e(G)\geq (c-1)(n-1)/2$, then $G$ contains a cycle of length at least $c$, unless $n=q(c-2)+1$ and $G\cong K_1\vee qK_{c-2}$.
		\end{lemma}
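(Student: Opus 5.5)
The last statement is Lemma \ref{l3} (Erdős–Gallai), a cited classical result, so I only need to sketch a standard proof. I would argue by strong induction on $n$, via the connectivity structure.

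\emph{Reduction to the $2$-connected case.} If $G$ is disconnected, or connected with a cut-vertex, write $G$ as a union of pieces $G_1,\dots,G_m$ of orders $n_i\ge 2$. These are the components, or the end-block plus the rest glued at the cut-vertex, so $\sum (n_i-1)\le n-1$. If no piece has a cycle of length at least $c$, then by induction $e(G_i)\le (c-1)(n_i-1)/2$. Summing gives $e(G)\le (c-1)(n-1)/2$. Hence equality forces three things: the graph is connected, so the sum is exactly $n-1$; every piece attains equality; and, by induction on the characterization, every block is a $K_{c-1}$.

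It remains to check which gluings of $K_{c-1}$'s avoid cycles of length at least $c$. Any two blocks meeting at a cut vertex create no new cycles, so every block-tree of $K_{c-1}$'s is extremal. Here I must be careful: the stated exception $K_1\vee qK_{c-2}$ (all blocks sharing a single vertex) is only one such configuration. For the paper's uses, I would state the lemma as ``unless every block of $G$ is a $K_{c-1}$.'' This includes $K_1\vee qK_{c-2}$ and forces $n=q(c-2)+1$. I would flag that the lemma as literally stated is slightly too narrow. For instance, a path of three $K_{c-1}$'s is not of the form $K_1\vee 3K_{c-2}$, yet it is extremal.

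\emph{The $2$-connected case.} This is the main obstacle. Take a longest path $P=v_0\dots v_\ell$, chosen over all longest paths to maximize $\deg v_0+\deg v_\ell$. All neighbours of $v_0$ and $v_\ell$ lie on $P$. Pósa's crossing argument then gives a cycle of length at least $\min\{\ell+1,\deg v_0+\deg v_\ell\}$: if $v_0v_{i+1}$ and $v_iv_\ell$ are both edges, we close a cycle through all of $P$. Combined with $2$-connectivity, which rules out the case $\ell+1<n$ with a Hamiltonian-path cycle not extending, this gives $c(G)\ge\min\{n,\deg v_0+\deg v_\ell\}$.

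If $c(G)<c$, then some vertex has degree at most $(c-1)/2$. Deleting it, after checking that $G-v$ keeps the edge bound, lets induction apply to $G-v$ with $n-1$ vertices. Strictness then propagates: $2$-connected graphs never attain equality except $K_{c-1}$ itself. The delicate point is ensuring that the low-degree vertex exists whenever the endpoint degree sum is less than $c$. This is the classical Erdős–Gallai / Kopylov endpoint argument, which I would follow rather than reinvent.
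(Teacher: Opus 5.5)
The paper does not prove this lemma. It is quoted from Erd\H{o}s and Gallai and used exactly once, in the proof of Lemma~\ref{l4} with $c=4$ and $n=5$, i.e.\ $q=2$. Your diagnosis of the statement is correct. Every connected graph whose blocks are all copies of $K_{c-1}$ has $n=q(c-2)+1$, exactly $(c-1)(n-1)/2$ edges, and no cycle of length at least $c$. So for $q\ge 3$ the exceptional family is strictly larger than $\{K_1\vee qK_{c-2}\}$. Your path of three $K_{c-1}$'s works for every $c\ge 3$, and for $c=3$ any non-star tree already breaks the literal statement. The lemma also needs $c\ge 3$, since $G=K_2$ violates it for $c=2$. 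For $q\le 2$ the only block tree of $q$ copies of $K_{c-1}$ is $K_1\vee qK_{c-2}$, so the paper's single application is unaffected. Your reduction to the $2$-connected case, including the equality bookkeeping, is sound.

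The gap is in the $2$-connected case, which is the real content of the theorem.
\begin{itemize}
\item The crossing argument only shows that $V(P)$ spans a cycle when $\deg v_0+\deg v_\ell\ge \ell+1$. It does not give a cycle of length $\deg v_0+\deg v_\ell$ when that sum is at most $\ell$. The inequality $c(G)\ge\min\{\ell+1,\deg v_0+\deg v_\ell\}$ you attribute to it is false without $2$-connectivity: take two copies of $K_m$, $m\ge 3$, sharing a vertex; then $\ell+1=2m-1$ and $\deg v_0+\deg v_\ell=2m-2$, but $c(G)=m$.
\item You invoke $2$-connectivity only to pass from $\ell+1$ to $n$, where connectivity suffices. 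The step that genuinely needs it is the Bondy/Kopylov longest-path lemma, or more simply Dirac's theorem that a $2$-connected graph has a cycle of length at least $\min\{n,2\delta(G)\}$. That is exactly the step you leave to the literature.
\item The claim ``if $c(G)<c$ then some vertex has degree at most $(c-1)/2$'' fails for $n\le c-1$; take $G=K_{c-1}$. That range must be handled directly via $e(G)\le\binom n2\le (c-1)(n-1)/2$, with equality only for $G=K_{c-1}$.
\item ``Strictness then propagates'' needs an argument. For $n\ge c$ and $c$ even, the deleted vertex has degree at most $(c-2)/2$, so the inequality is strict. For $c$ odd you only get $\deg v=(c-1)/2$ with $G-v$ extremal, and you must rule this out. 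By $2$-connectivity, $v$ has a neighbour in every end-block of $G-v$ other than its cut vertex, or at least two neighbours if $G-v=K_{c-1}$. A Hamiltonian path through such a $K_{c-1}$-block, continued in $G-v$ and closed through $v$, gives a cycle of length at least $c$.
\end{itemize}
With Dirac's theorem and these two additions, your outline becomes a complete proof of the corrected statement.
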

		
		The strongest result was obtained by Kopylov \cite{GK}, who improved the Erdős--Gallai bound for $2$-connected graphs. For $n\geq c\geq 4$ and $\frac c2>a\geq 1$, define $$H_{n,c,a}=K_a\vee (K_{c-2a}\cup (n-c+a)K_1).$$ Clearly, when $a \ge 2$, $H_{n,c,a}$ is $2$-connected, has no cycle of length $c$ or longer, and let $$g(n,c,a)=e(H_{n,c,a}) = \binom{c-a}{2} + (n-c+a)a.$$		
		\begin{lemma}[Kopylov \cite{GK}]\label{Ko}
			Let $n \ge c \ge 5$ and let $s = \left\lfloor \frac{c-1}{2} \right\rfloor$. If $G$ is a $2$-connected $n$-vertex graph with
			\[
			e(G) \ge \max \{ g(n,c,2), g(n,c,s) \},
			\]
			then either $G$ has a cycle of length at least $c$, or $G = H_{n,c,2}$, or $G = H_{n,c,s}$.
		\end{lemma}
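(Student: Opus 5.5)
The statement to prove is Lemma~\ref{Ko} (Kopylov's theorem), which the paper quotes from \cite{GK} and uses as a tool. Any plan here is a reconstruction of the classical argument, not something built from earlier results in the paper.

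The approach is induction on $n$, combined with the closure operation and a Kopylov-type degree-disintegration argument. Let $G$ be a $2$-connected graph with no cycle of length at least $c$. We may take $G$ edge-maximal with respect to this property, since adding edges only increases $e(G)$, and at the end we check that equality forces one of the two extremal structures.

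\textbf{Step 1 (closure).} Apply the Bondy--Chvátal closure for circumference: whenever $\deg(u)+\deg(v)\ge c$ for a nonadjacent pair, adding $uv$ creates no cycle of length at least $c$. This holds because a $2$-connected graph with a long path between such endpoints already contains a long cycle. So by maximality, every nonadjacent pair has degree sum at most $c-1$.

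\textbf{Step 2 (disintegration).} Fix $t\in\{2,s\}$ and repeatedly delete vertices of degree at most $t$ until none remain. Call the result $H_t$, the $t$-core.
\begin{itemize}
\item If $H_t$ is empty, then $e(G)\le \binom{c-t}{2}+(n-c+t)t$ after counting. Here the last $c-t$ deleted vertices span at most $\binom{c-t}{2}$ edges, and each earlier deletion removes at most $t$ edges.
\item If $H_t$ is nonempty, then $H_t$ has minimum degree greater than $t$. Using a longest path in $G$ together with $2$-connectivity, Pósa-type rotation arguments produce a cycle of length at least $\min\{c, 2(t+1)\}$ in $G$. One then argues that $|H_t|\le c-t$.
\end{itemize}
Otherwise one obtains a long cycle: take a longest cycle $C$, of length less than $c$, and extend it through a component of $G-C$ using $2$-connectivity.

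\textbf{Step 3 (counting).} Compare $e(G)$ against $g(n,c,t)$. The function $g(n,c,a)$ is convex in $a$, so its maximum over $a\in[2,s]$ is attained at an endpoint $a=2$ or $a=s$. In the borderline case, every deleted vertex has degree exactly $t$ and the core is a clique $K_{c-t}$. Combined with the absence of long cycles, this forces the deleted vertices to attach to a common $t$-set inside the clique, which is exactly $H_{n,c,t}$.

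\textbf{Main obstacle.} The hardest step is showing $|H_t|\le c-t$ when the core is nonempty. This requires the delicate lemma that a $2$-connected graph containing a long path whose endpoints have large degree into the core has a cycle of length at least $c$, analogous to Kopylov's original lemma on paths. I would try Kopylov's approach: take a longest path with endpoints in the core, and use the pigeonhole principle on the neighbors of its endpoints to close a cycle of length at least $c$.
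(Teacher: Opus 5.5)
\textbf{There is a genuine gap in Step 2.} The paper only quotes this lemma from Kopylov without proof, so I am comparing your plan with the standard closure-and-disintegration argument. The step you call the ``main obstacle'' is not just hard; it is false for both choices of $t$.

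Each extremal graph already defeats one of them:
\begin{itemize}
\item For $c\ge 7$, the $s$-core $H_s$ of $H_{n,c,2}$ is $K_{c-2}$, of order $c-2>c-s$, although $c(H_{n,c,2})=c-1$. This also refutes your claimed cycle of length $\min\{c,2(s+1)\}=c$.
\item The $2$-core $H_2$ of $H_{n,c,s}$ is the whole graph, of order $n>c-2$.
\end{itemize}
Worse, $H_{n,9,3}=K_3\vee(K_3\cup(n-6)K_1)$ is $2$-connected, $9$-closed and edge-maximal with circumference $8$. Yet its $H_2$ is everything and its $H_s$ (with $s=4$) is $K_6$, of order $6>c-s=5$. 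So no rule for picking $t\in\{2,s\}$ rescues Step 2, which never uses the edge hypothesis anyway.

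The edge count can only enter through a bound $e(G)\le g(n,c,a)$ with a graph-dependent $a\in[2,s]$. Your scheme never produces such an $a$, so the convexity remark in Step 3 has nothing to act on. In addition, the closure from Step 1 is never actually used, and the claim in Step 3 that the core is a clique is unsupported.

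\textbf{The missing idea.} Closure gives one key consequence: in a $c$-closed graph, any two vertices of degree at least $s+1\ge c/2$ are adjacent. Use it as follows.
\begin{itemize}
\item Disintegrate only with parameter $s$. If the core $H$ is empty, then $e(G)\le s(n-s-1)+\binom{s+1}{2}\le g(n,c,s)$.
\item Otherwise $H$ is a clique $K_r$ with $r\ge s+2$. Since $r\le c-1<n$, $2$-connectivity gives an $H$-path through an outside vertex, hence a cycle of length at least $r+1$. So $r\le c-2$. Put $a=c-r$, so that $2\le a\le c-s-2\le s$.
\item A vertex $v\notin H$ cannot be complete to $H$. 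Otherwise its degree would stay at least $r>s$ throughout the disintegration, and it would never be deleted.
\item So $v$ has a non-neighbor $u\in H$ with $d(u)\ge r-1$, and closedness gives $d(v)\le a$.
\item Hence $e(G)\le\binom{r}{2}+a(n-r)=g(n,c,a)$. Only now does convexity in $a$ give the bound, and no P\'osa rotations are needed.
\end{itemize}

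\textbf{Equality case.} Strict convexity forces $a\in\{2,s\}$, makes $G-H$ edgeless, and gives every outside vertex exactly $a$ neighbors in $H$. You must then show these neighborhoods coincide, since otherwise one finds a cycle of length $c$. You must also treat equality in the empty-core case separately; that is where $H_{n,c,s}=K_s\vee(n-s)K_1$ arises for odd $c$.
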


\section{Proofs of Theorems \ref{t1}, \ref{t3}, \ref{t5} and \ref{t2}}

\begin{lemma}\label{l4}
	Let $G$ be an $n$-vertex graph with $3\leq k+2\leq n\leq 2k+1$. If $$e(G)\geq 	\binom{k+1}{2}+\binom{n-k}{2},$$ then $G$ contains cycles of every length $3,4,\dots,k+2$, unless $G$ is isomorphic to one of the following graphs:
	
	{\rm (i)} $K_1 \vee (K_k \cup K_{n-k-1})$, $K_{k,k}$, or $K_{k,k+1}$;
	
	{\rm (ii)} $H_{\frac{3k+1}{2},k+2,\frac{k+1}{2}}$ or $H_{\frac{3k+3}{2},k+2,\frac{k+1}{2}}$ with odd $k\geq 3$.
\end{lemma}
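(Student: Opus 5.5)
The plan is to split into the non-bipartite and bipartite cases, and in each case reduce the claim to a circumference bound $c(G)\ge k+2$. Write $m=\binom{k+1}{2}+\binom{n-k}{2}$.

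\textbf{Step 1: Brandt applies.} We first check that $m>(n-1)^2/4+1$ throughout the range $k+2\le n\le 2k+1$. The function $\binom{k+1}{2}+\binom{n-k}{2}-(n-1)^2/4$ is convex in $n$, so it suffices to check the endpoints and the region near its minimum. A direct computation should give strict inequality except possibly in a few small cases ($k\le 2$), which we handle by hand.

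\textbf{Step 2: the bipartite case.} If $G$ is bipartite, then $e(G)\le\lfloor n^2/4\rfloor$. Combined with $e(G)\ge m$, this forces $n\in\{2k,2k+1\}$ and $G$ complete balanced bipartite, i.e.\ $K_{k,k}$ or $K_{k,k+1}$ (these have no odd cycles), except for small $k$, which we check directly. So we may assume $G$ is non-bipartite. By Lemma~\ref{l1}, $G$ then contains all cycle lengths from $3$ to $c(G)$, and it remains to show $c(G)\ge k+2$ unless $G$ is one of the listed exceptions.

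\textbf{Step 3: the circumference bound.} Suppose $c(G)\le k+1$. We reduce to blocks, since cycles live inside blocks. Let $B$ be a block with $|B|=b$. If $B$ is not $2$-connected, it is a single edge. For $2$-connected blocks with $b\ge k+2$, Kopylov's Lemma~\ref{Ko} with $c=k+2$ gives
\[
e(B)\le \max\{g(b,k+2,2),\,g(b,k+2,s)\},
\]
with equality only for $H_{b,k+2,2}$ or $H_{b,k+2,s}$. For blocks with $b\le k+1$ we use $e(B)\le\binom{b}{2}$.

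We then maximize $\sum_B e(B)$ subject to $\sum_B(|B|-1)\le n-1$. By convexity of $\binom{x}{2}$, the maximum is attained with one block as large as possible. Two candidates emerge:
\begin{itemize}
\item a block $K_{k+1}$ plus blocks covering the remaining $n-k-1$ vertices, giving at most $\binom{k+1}{2}+\binom{n-k}{2}$ edges, with equality exactly for $K_1\vee(K_k\cup K_{n-k-1})$ (note $n-k\le k+1$, so the second clique is allowed);
\item a single Kopylov block, compared against $m$ over $n\le 2k+1$.
\end{itemize}
For the second candidate we need $g(n,k+2,2)<m$ and $g(n,k+2,s)\le m$. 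Equality in the latter occurs only for odd $k$ with $s=(k+1)/2$ and $n\in\{(3k+1)/2,(3k+3)/2\}$, which yields the exceptions in (ii).

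When $G$ is disconnected, we can add edges between components without creating new cycles. Moreover, Lemma~\ref{l3} with $c=k+2$ gives $e(G)\le(k+1)(n-1)/2$, and this is below $m$ except near the endpoints of the range. This helps prune cases before the fine analysis.

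\textbf{Main obstacle.} The hardest part is the careful comparison of the mixed configurations: one Kopylov-type block together with additional small blocks, across the whole range $n\le 2k+1$. We must confirm that the function $g(\cdot,k+2,s)$ plus clique blocks never reaches $m$ except in the stated cases. I would parametrize by the order $b$ of the largest block and use convexity in $b$ to reduce to the endpoints $b=k+2$ and $b=n$.
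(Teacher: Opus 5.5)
Your proposal is correct. It uses the same ingredients as the paper: the bipartite/$\lfloor n^2/4\rfloor$ step, Brandt's Lemma~\ref{l1} to reduce to showing $c(G)\ge k+2$, and Kopylov's Lemma~\ref{Ko} on a large block. The circumference step is organized differently, though.

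\textbf{How the paper does it.} The paper takes a maximum-size graph with $e(G)\ge m$ and $c(G)\le k+1$. It moves a non-cut vertex from a smallest end-block to a largest end-block to force a block of order at least $k+1$. It then applies the block leaf transformation to get $e(B)\ge m-\binom{n-t+1}{2}$ for a block of order $t\ge k+2$, and contradicts Kopylov via a quadratic in $t$.

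\textbf{How yours differs.} You bound $e(G)=\sum_B e(B)$ block by block and optimize over block orders subject to $\sum_B(|B|-1)\le n-1$. This avoids the maximum-counterexample device, covers disconnected $G$ with no separate argument, and produces the equality cases directly. Your ``mixed configuration'' bound is the paper's inequality in another form, and your convexity plan does go through:
\begin{itemize}
\item Since $n-1\le 2k$, at most one block has order $b\ge k+2$. The remaining blocks have total budget $n-b\le k-1$, so $e(G)\le F(b):=\max\{g(b,k+2,2),g(b,k+2,s)\}+\binom{n-b+1}{2}$, which is convex in $b$.
\item When $n\ge k+3$, one gets $m-F(k+2)=\binom{k+1}{2}-\max\{g(k+2,k+2,2),g(k+2,k+2,s)\}+(n-k-1)>0$. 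For $k=3$ the first difference is $6-7=-1$, so the slack $n-k-1\ge 2$ is genuinely needed; you should record this.
\item Together with $F(n)\le m$, convexity gives $F(b)<m$ for every $k+2\le b<n$. So equality can occur only for a single $2$-connected block or with all blocks of order at most $k+1$.
\end{itemize}

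\textbf{Small slips, none load-bearing.}
\begin{itemize}
\item The Erd\H{o}s--Gallai remark is backwards. $(k+1)(n-1)/2-m$ is concave in $n$ and vanishes at $n=k+1$ and $n=2k+1$. So Lemma~\ref{l3} never gives a bound below $m$ in this range; it only settles the endpoint $n=2k+1$.
\item The claim $g(n,k+2,2)<m$ fails for $k=3$, $n\in\{5,6\}$. There, however, $s=2$, so $H_{n,5,2}=H_{n,5,s}$ and these graphs are already the exceptions in (ii).
\item Lemma~\ref{Ko} requires $c=k+2\ge 5$. So $k=2$ must be handled separately, including $n=5$, where the exception $K_1\vee 2K_2$ appears; the paper does this via Lemma~\ref{l3}.
\end{itemize}
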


\begin{proof}
	For convenience, we have $f(n,k)=	\binom{k+1}{2}+\binom{n-k}{2}=k^2+(1-n)k+\frac{n^2-n}{2}$. 
	
	For $k+2\leq n\leq 2k-1$, we have $$\begin{aligned}
		f(n,k)- \frac{n^2}{4} &=k^2+(1-n)k+\frac{n^2-n}{2}- \frac{n^2}{4}\\&\geq \left(\frac{n+1}{2}\right)^2+\frac{(1-n)(n+1)}{2}+\frac{n^2-n}{2}-\frac{n^2}{4}=\frac34>0.
	\end{aligned}$$ Thus $G$ is non-bipartite for $k+2\leq n\leq 2k-1$ by $e(G)\geq f(n,k)>\frac{n^2}{4}$.
	
	For $2k\leq n\leq 2k+1$, it is easy to check $f(n,k)=\lfloor \frac{n^2}{4} \rfloor$. If $G$ is bipartite, then $G\cong K_{\lfloor \frac{n}{2} \rfloor, \lceil \frac{n}{2} \rceil}$ by $e(G)\geq f(n,k)=\lfloor \frac{n^2}{4} \rfloor$.
	
	Next, we may assume $G$ is non-bipartite. If $n \in \{3,4\}$, the result is easily verified. If $n\geq 5$, then $e(G)\geq f(n,k)\geq \lfloor \frac{n^2}{4} \rfloor> \frac{(n-1)^2}{4}+1$. Thus by Lemma \ref{l1}, $G$ contains cycles of every length $s$, where $3\leq s\leq c(G)$. The lemma is immediate when $n\leq 4$ or $c(G) \ge k+2$, so we consider the case $n\geq 5$ and $c(G) \le k+1$ in what follows. 
	
	By $n\geq 5$, we have $k\geq 2$. If $k=2$, then $n=5$ and $e(G)=f(5,2)=6= \frac{3(n-1)}{2}$, which implies $G \cong K_1 \vee 2K_2$ by Lemma \ref{l3} and $c(G)\leq k+1=3$. Thus, we may assume that $k \geq 3$.
	
	We choose a graph $G$ of order $n$ with $e(G)\geq f(n,k)$ and $c(G)\leq k+1$ that has the maximum size. To prove this lemma, it suffices to show that $G$ is isomorphic to one of the graphs listed above, since these graphs attain size $f(n,k)$.
	
	Firstly, we claim that $G$ is connected. Suppose not, and let $G'$ be the graph obtained from $G$ by adding a single edge between two vertices in different components, leaving all other edges unchanged. Then $e(G') = e(G) + 1 \ge f(n,k)$ and $c(G') = c(G) \le k+1$, which contradicts the choice of $G$. 
	
	{\noindent\bf Case 1.} $G$ has connectivity one.
	
	In this case, we claim that $G$ contains a block of order at least $k+1$. Suppose otherwise. Let $B_{\max}$ and $B_{\min}$ be a largest and a smallest end-block of $G$, respectively, such that $B_{\max} \neq B_{\min}$. Then $|B_{\min}| \le |B_{\max}| \le k$.	Choose a non-cut-vertex $v \in B_{\min}$ of $G$. Let $G''$ be obtained from $G$ by deleting all edges between $v$ and $B_{\min} \setminus \{v\}$, and joining $v$ to every vertex of $B_{\max}$. Then $e(G'') > e(G) \geq f(n,k)$. Moreover, $c(G'') \leq k+1$ since every block in $G''$ has at most $k+1$ vertices. This contradicts the choice of $G$.
	
	Hence $G$ has a block $B$ such that $|B|\geq k+1$. Now we show $G\cong K_1\vee(K_k\cup K_{k-1})$. Since $B$ may not be an end-block, we apply the block leaf transformation to $G$ with respect to $B$, obtaining $G^* = \mathcal{L}(G, B)$ as defined in Definition~\ref{d1}. Then $B$ is an end-block of $G^*$, and $e(G^*)=e(G),\ |G|=|G^*|,\ c(G)=c(G^*)$ by Observation \ref{o1}. 
	
	If $|B| = k+1$, then $G^*$ is isomorphic to a spanning subgraph of $K_1 \vee (K_k \cup K_{n-k-1})$. By $e(G^*)=e(G)$, we have $$f(n,k)\leq e(G)=e(G^*)\leq e(K_1\vee(K_k\cup K_{n-k-1}))=f(n,k).$$ Then $e(G)=e(G^*)=e(K_1\vee(K_k\cup K_{n-k-1}))$ and $G^*$ has exactly two blocks. It follows that $G^*\cong K_1\vee(K_k\cup K_{n-k-1})$, and thus $G\cong G^*\cong K_1\vee(K_k\cup K_{n-k-1})$ by Definition \ref{d1}.
	
	If $|B|=t\geq k+2$, then $t\leq n-1$ since $G$ is not $2$-connected. Thus we have
	\begin{equation}\label{equ2}
		\begin{aligned}
			e(B)&\geq e(G^*)-e(K_{n-t+1}) \\&\geq k^2+(1-n)k+\frac{n^2-n}{2}-\frac{(n-t+1)(n-t)}{2}\\&=k^2+k+(t-k-1)n+\frac{t-t^2}{2}\\&\geq k^2+k+(t-k-1)(t+1)+\frac{t-t^2}{2}\\&=\frac{t^2}{2}+\left(\frac12-k\right)t+k^2-1.
		\end{aligned}
	\end{equation}
	
	Recall that $g(n,c,a)=e(H_{n,c,a}) = \binom{c-a}{2} + (n-c+a)a$. Then we have $g(t,k+2,2)=\binom{k}{2}+2(t-k)$ and $g(t,k+2,\lfloor \frac{k+1}{2}\rfloor)=\binom{\lceil\frac{k+1}{2}\rceil+1}{2}+\lfloor \frac{k+1}{2}\rfloor(t-\lceil\frac{k+1}{2}\rceil-1)$. 
	
	We now show that $e(B) > \max\{g(t,k+2,2),\, g(t,k+2,\lfloor \tfrac{k+1}{2} \rfloor)\}$. By (\ref{equ2}), we have $$\begin{aligned}
		e(B)-g(t,k+2,2)&\geq \frac{t^2}{2}+\left(\frac12-k\right)t+k^2-1-\frac{k(k+1)}{2}-2(t-k)\\&=\frac{t^2}{2}-\left(\frac32+k\right)t+\frac12k^2+\frac52k-1.
	\end{aligned}$$
	
	Let $h_0(t)=\frac{t^2}{2}-\left(\frac32+k\right)t+\frac12k^2+\frac52k-1$. By $\frac{2k+3}{2}\leq k+2\leq t$, we derive that $h_0(t)\geq h_0(k+2)=k-2>0$. Then $e(B)-g(t,k+2,2)\geq h_0(t)>0$.
	
	To prove $e(B)> g(t,k+2,\lfloor \frac{k+1}{2}\rfloor)$, we proceed by considering the parity of $k$. 
	
	For odd $k$, it holds that $g(t,k+2,\lfloor \frac{k+1}{2}\rfloor)=\binom{\lceil\frac{k+1}{2}\rceil+1}{2}+\lfloor \frac{k+1}{2}\rfloor(t-\lceil\frac{k+1}{2}\rceil-1)=\frac{(k+1)(4t - k - 3)}{8}$. By (\ref{equ2}), we have  $$\begin{aligned}
		e(B)-g(t,k+2,\left\lfloor \frac{k+1}{2}\right\rfloor)&\geq\frac{t^2}{2}+\left(\frac12-k\right)t+k^2-1-\frac{(k+1)(4t - k - 3)}{8}\\&=\frac{1}{2} t^2 - \frac{3k}{2} t + \frac{9k^2 + 4k - 5}{8}\\&\geq \frac12\left(\frac{3k}{2}\right)^2-\left(\frac{3k}{2}\right)^2+\frac{9k^2 + 4k - 5}{8}\\&=\frac{4k - 5}{8}>0.
	\end{aligned}$$
	
	For even $k$, it holds that $g(t,k+2,\lfloor \frac{k+1}{2}\rfloor)=\frac{4kt - k^2 - 2k + 8}{8}$. By (\ref{equ2}), we have $$\begin{aligned}
		e(B)-g(t,k+2,\left\lfloor \frac{k+1}{2}\right\rfloor)&\geq\frac{t^2}{2}+\left(\frac12-k\right)t+k^2-1-\frac{4kt - k^2 - 2k + 8}{8}\\&=\frac{1}{2}t^2 + \frac{1-3k}{2}t + \frac{9k^2 + 2k - 16}{8}\\&\geq \frac{1}{2}\left(\frac{3k-1}{2}\right)^2 + \frac{1-3k}{2}\left(\frac{3k-1}{2}\right) + \frac{9k^2 + 2k - 16}{8}\\&\geq \frac{8k - 17}{8}>0.
	\end{aligned}$$
	
	Combining the above argument, $e(B) > \max\{g(t,k+2,2),\, g(t,k+2,\lfloor \tfrac{k+1}{2} \rfloor)\}$, and thus $c(G)\geq c(B) \geq k+2$ by Lemma \ref{Ko}, a contradiction with $c(G)\leq k+1$.
	
	{\noindent\bf Case 2.} $G$ is $2$-connected.
	
	Recall that $e(G)\geq f(n,k)=k^2+(1-n)k+\frac{n^2-n}{2}$. Then we have $$\begin{aligned}
		e(G)-g(n,k+2,2)&\geq k^2+(1-n)k+\frac{n^2-n}{2}-\binom{k}{2}-2(n-k)\\&=\frac{1}{2}n^2 - \left(k + \frac{5}{2}\right)n + \frac{k^2 + 7k}{2}.
	\end{aligned}$$
	
	Let $h_1(x)=\frac{1}{2}x^2 - \left(k + \frac{5}{2}\right)x + \frac{k^2 + 7k}{2}$. Then the axis of symmetry of the function $h_1(x)$ is the line $x=k+\frac52$. Since $n$ is an integer with $n\geq k+2$, we have $h_1(n)\geq \min\{h_1(k+2),h_1(k+3)\}=k-3\geq 0$. Thus $e(G)-g(n,k+2,2)\geq 0$ with equality only if $k=3$ and $n\in \{5,6\}$. 
	
	To prove $e(G)\geq g(n,k+2,\lfloor \frac{k+1}{2}\rfloor)$, we proceed by considering the parity of $k$. 
	
	For odd $k$, it holds that $g(t,k+2,\lfloor \frac{k+1}{2}\rfloor)=\frac{(k+1)(4n - k - 3)}{8}$. Then we have $$\begin{aligned}
		e(G)- g(n,k+2,\left\lfloor \frac{k+1}{2}\right\rfloor)&\geq k^2+(1-n)k+\frac{n^2-n}{2}-\frac{(k+1)(4n - k - 3)}{8}\\&=\frac{1}{2}n^2 - \frac{3k + 2}{2}n + \frac{9k^2 + 12k + 3}{8}.
	\end{aligned}$$
	
	Let $h_2(n)=\frac{1}{2}n^2 - \frac{3k + 2}{2}n + \frac{9k^2 + 12k + 3}{8}$. Since $k$ is odd and $n$ is an integer, we have $h_2(n)\geq \min\{h(\frac{3k+1}{2}),h(\frac{3k+3}{2})\}=0$. Then $e(G)- g(n,k+2,\lfloor \frac{k+1}{2}\rfloor)\geq 0$ with equality only if $n\in\{\frac{3k+1}{2},\frac{3k+3}{2}\}$ and $k$ is odd. 
	
	For even $k$, it holds that $g(n,k+2,\lfloor \frac{k+1}{2}\rfloor)=\frac{4kn - k^2 - 2k + 8}{8}$. Then we have
	$$\begin{aligned}
		e(G)- g(n,k+2,\left\lfloor \frac{k+1}{2}\right\rfloor)&\geq k^2+(1-n)k+\frac{n^2-n}{2}-\frac{4kn - k^2 - 2k + 8}{8}\\&=\frac{1}{2}n^2 - \frac{3k+1}{2}n + \frac{9k^2 + 10k - 8}{8}\\&\geq \frac{1}{2}\left(\frac{3k+1}{2}\right)^2 - \frac{3k+1}{2}\left(\frac{3k+1}{2}\right) + \frac{9k^2 + 10k - 8}{8}\\&=\frac{4k - 9}{8}>0.
	\end{aligned}$$
	
	Combining the above argument, we obtain $e(B) \geq \max\{g(n,k+2,2),\, g(n,k+2,\lfloor \tfrac{k+1}{2} \rfloor)\}$, with equality only if $n\in\{\tfrac{3k+1}{2},\tfrac{3k+3}{2}\}$ and $k\geq 3$ is odd. Therefore, $G$ is isomorphic to $H_{\frac{3k+1}{2},k+2,\frac{k+1}{2}}$ or $H_{\frac{3k+3}{2},k+2,\frac{k+1}{2}}$ for odd $k\geq 3$ by Lemma \ref{Ko} and $c(G)\leq k+1$.
\end{proof}
\vspace{0.5cm}

{\noindent\bf Proof of Theorem \ref{t1}.} Let $G$ be a graph order $n\geq k+2$ with $$e(G)\geq f(n,k)=\begin{cases}
	\binom{k+1}{2}+\binom{n-k}{2}, &\text{if }k+2\leq n\leq 2k+1,\\
	k(n-k), &\text{if } n\geq 2k+2.
\end{cases}$$

The case $k=1$ yields $f(n,k)=n-1$, which is trivial. Hence, we assume without loss of generality that $k \geq 2$.

Suppose that $G$ contains no $k$ admissible cycles. It suffices to prove that $G$ is isomorphic to one of the graphs in $\mathcal{H}_{n,k} = \{K_1\vee (K_{k}\cup K_{n-k-1}) \mid k+2\leq n\leq 2k+1 \} \cup \{K_{(k+1)/{2}}\vee bK_1 \mid b\in\{k,k+1\}, k\geq 3 \text{ is odd}\} \cup \{K_{k,n-k} \mid n\geq 2k\}$. 

If $k+2\leq n\leq 2k+1$, then $G$ is isomorphic to one of the graphs in $\mathcal{H}_{n,k}$ by our assumption and Lemma \ref{l4}. Next, we may assume $n\geq 2k+2$, and show $G\cong K_{k,n-k}$.

By Theorem \ref{ta} and our assumption, every subgraph of $G$ contains a vertex of degree at most $k$. A straightforward induction then yields an ordering $v_1, v_2, \dots, v_n$ of $V(G)$ such that for any $2 \leq i \leq n$,
$$|N_{G}(v_i)\cap\{v_1, v_2, \dots, v_{i-1}\}|\leq k.$$ Since $e(G)\geq f(n,k)=k(n-k)$, for any $2k+1\leq i\leq n$, we have
\begin{equation}\label{equ1}
	e(G[\{v_1,v_2,\dots,v_i\}])\geq k(i-k).
\end{equation}

We prove the theorem by induction on $n$, starting with the base case $n = 2k+2$.

If $n=2k+2$, then by (\ref{equ1}), we have $e(G[\{v_1, v_2, \dots, v_{2k+1}\}])\geq k(k+1)=f(2k+1,k)$, which implies $G[\{v_1, v_2, \dots, v_{2k+1}\}]$  is isomorphic to either $K_{k,k+1}$ or $K_1 \vee 2K_k$ by Lemma \ref{l4}. Since $e(G[\{v_1, v_2, \dots, v_{2k+1}\}]) = k(k+1)$, we obtain $\deg_G(v_n) = k$. If $G[\{v_1, v_2, \dots, v_{2k+1}\}]\cong K_1\vee 2K_k$, then $G$ contains cycles of every length $3,4,\dots,k+2$ by $\deg_G(v_n)=k$, a contradiction with our assumption. Thus, $G[\{v_1, v_2, \dots, v_{2k+1}\}] \cong K_{k,k+1}$. Moreover, as $G$ contains no $k$ admissible cycles, $N_G(v_n)$ is contained in the part with $k$ vertices of the complete bipartite graph $G[\{v_1, v_2, \dots, v_{2k+1}\}]$, which implies that $G \cong K_{k,k+2}$. Otherwise, $G$ would contain cycles of every length $3,5,7,\dots,2k+1$, or $2,4,6,\dots,2k+2$, which contradicts our assumption.

If $n \geq 2k+3$, then we prove the statement by induction. Assume that the conclusion holds for $n-1$, and consider the case $n$. Let $G_{n-1} = G[\{v_1, v_2, \dots, v_{n-1}\}]$. By (\ref{equ1}), we have $e(G_{n-1}) \geq k(n-1-k)$. Clearly, $G_{n-1}$ contains no $k$ admissible cycles. Thus $G_{n-1} \cong K_{k,n-1-k}$ by the induction hypothesis. It follows that $e(G_{n-1}) = k(n-1-k)$, and therefore $\deg_G(v_n) = k$. Since $G$ contains no $k$ admissible cycles, we deduce that $N_G(v_n)$ is contained in the part with $k$ vertices of the complete bipartite graph $G_{n-1} \cong K_{k,n-1-k}$, which implies that $G \cong K_{k,n-k}$. 

This completes the proof. \hfill\ensuremath{\blacksquare} 

\vspace{0.5cm}

{\noindent\bf Proof of Theorem \ref{t3}.} The result of this theorem follows directly by substituting $2k$ for $k$ in Lemma \ref{l4}.\hfill\ensuremath{\blacksquare} 

\vspace{0.5cm}

{\noindent\bf Proof of Theorem \ref{t5}.} Let $G$ be a graph of order $n$ with $n\geq 2k+2$ and $e(G)\geq 2k(n-k-\frac12)$. Assume that $G$ contains no $k$ cycles with consecutive even lengths.

By Theorem \ref{tb} and our assumption, for every subgraph of $G$, there exists a vertex whose degree is at most $2k$. Clearly, there exists an ordering $v_1,v_2,\dots,v_n$ of the vertices of $G$ such that for any $2\leq i\leq n$,
$$|N_G(v_i)\cap\{v_1,v_2,\dots,v_{i-1}\}|\leq 2k.$$  Thus for any $2k+1\leq i \leq n$, we have $
	e(G[\{v_1,v_2,\dots,v_i\}])\geq 2k(i-k-\frac 12).
$

We prove the theorem by induction on $n$, starting with the base case $n = 2k+2$. 

If $n=2k+2$, then $e(G-v_n)\geq 2k(n-1-k-\frac 12)=\frac{2k(2k+1)}{2}$, and thus $G-v_n\cong K_{2k+1}$. Since $e(G-v_n)=2k(n-1-k-\frac 12)$ and $e(G)\geq 2k(n-k-\frac 12)$, we have $\deg_G(v_n)=2k$. Hence $G$ contains $k$ cycles of every length $4,6,\dots,2k+2$, a contradiction.

If $n\geq 2k+3$, then $G-v_n$ contains $k$ cycles with consecutive even lengths by $e(G-v_n)\geq 2k(n-1-k-\frac 12)$ and the induction hypothesis, and thus $G$ contains $k$ cycles with consecutive even lengths, a contradiction. \hfill\ensuremath{\blacksquare}

\vspace{0.5cm}

{\noindent\bf Proof of Theorem \ref{t2}.}
Let $k=\lfloor \frac n2\rfloor$ and $G$ be a graph of order $n$ with $e(G)\geq \lfloor n^2/4\rfloor$.

If $n$ is even, then $n=2k$, and thus $\binom{k+1}{2}+\binom{n-k}{2}=k^2=\lfloor n^2/4\rfloor$. By Lemma \ref{l4} and $e(G)\geq \lfloor n^2/4\rfloor=\binom{k+1}{2}+\binom{n-k}{2}$, $G$ contains cycles of every length $3,4,\dots, k+2$, unless $G$ is isomorphic to $K_1\vee(K_k\cup K_{k-1})$, $K_{k,k}$, or $H_{6,5,2}=K_2\vee 4K_1$.

If $n$ is odd, then $n=2k+1$, and thus $\binom{k+1}{2}+\binom{n-k}{2}=k^2+k=\lfloor n^2/4\rfloor$. By Lemma \ref{l4} and $e(G)\geq \lfloor n^2/4\rfloor=\binom{k+1}{2}+\binom{n-k}{2}$, $G$ contains cycles of every length $3,4,\dots, k+2$, unless $G$ is isomorphic to $K_1\vee(K_k\cup K_{k})$ or $K_{k,k+1}$.\hfill\ensuremath{\blacksquare}

\vskip 5mm
{\noindent\bf Acknowledgement.} I am grateful to Professor Xingzhi Zhan for suggesting the author to study the topic of cycle lengths.


\section*{\normalsize Declaration}
				
\noindent\textbf{Conflict~of~interest}
The author declares that he has no known competing financial interests or personal relationships that could have appeared to influence the work reported in this paper.
		
\noindent\textbf{Data~availability}
No data was used for the research described in the article.

\end{document}